\documentclass[a4paper,11pt,reqno]{amsart}
\usepackage{amssymb, amsmath, amsfonts, amscd, mathtools}
\usepackage{mathrsfs, latexsym}
\usepackage[all,ps,cmtip]{xy}
\usepackage{array, longtable}
\usepackage{bm, enumitem}
\usepackage{xcolor}
\usepackage{comment}
\usepackage{tikz, tikz-cd}

\newcolumntype{C}{>{$}c<{$}}
\newcolumntype{L}{>{$}l<{$}}

\usepackage{todonotes}




\newcommand\OO{{\mathcal{O}}}

\newcommand{\Aut}{\operatorname{Aut}}


\newcommand{\GL}{\mathrm{GL}}

\newcommand{\KKK}{\operatorname{K3}}
\newcommand{\Kum}{\mathrm{Kum}}
\newcommand{\OG}{\mathrm{OG}}

\newcommand{\sm}{\textrm{sm}}
\newcommand{\topo}{\mathrm{top}}

\newcommand{\sL}{\mathcal{L}}
\newcommand{\sM}{\mathcal{M}}
\newcommand{\sO}{\mathcal{O}}
\newcommand{\sX}{\mathcal{X}}

\newcommand{\QQ}{\mathbb{Q}}
\newcommand{\CC}{\mathbb{C}}
\newcommand{\ZZ}{\mathbb{Z}}

\newtheorem{thm}{Theorem}[section]
\newtheorem{lem}[thm]{Lemma}

\newtheorem{prop}[thm]{Proposition}
\newtheorem{claim}[thm]{Claim}

\theoremstyle{definition}

\newtheorem{exmp}[thm]{Example}
\newtheorem{nota}[thm]{Notation}

\newtheorem{rmk}[thm]{Remark}

\theoremstyle{remark}


\begin{document}
\title[Numerically trivial automorphisms of hyperk\"ahler $4$-folds]{On numerically trivial automorphisms of compact hyperk\"ahler manifolds of dimension~4}
\date{\today}
\keywords{Hyperk\"ahler manifold, numerically trivial automorphism}
\subjclass[2020]{14J42, 14J50}


\author{Chen Jiang}
\address{Shanghai Center for Mathematical Sciences \& School of Mathematical Sciences, Fudan University, Shanghai 200438, China}
\email{chenjiang@fudan.edu.cn}

\author{Wenfei Liu}
\address{Xiamen University, Xiamen, Fujian Province 361005, P. R. China}
\email{wliu@xmu.edu.cn}

\begin{abstract} 
We prove that the automorphism group of a compact hyperk\"ahler manifold of dimension $4$ acts faithfully on the cohomology ring.
 \end{abstract}
\maketitle

\numberwithin{equation}{section}

\section{Introduction}
Throughout the paper, we work over the complex number field $\mathbb{C}$.

A compact K\"{a}hler manifold $X$ is \emph{hyperk\"{a}hler} or \emph{irreducible symplectic} if it is simply connected and $H^0(X, \Omega^2_X)$ is spanned by an everywhere non-degenerate $2$-form. Compact hyperk\"{a}hler manifolds form an important class of manifolds with $c_1=0$, and their rich geometry attracts much attention from different areas of mathematics.

Compact hyperk\"{a}hler manifolds have even complex dimensions, with the two-dimensional ones being known as \emph{K3 surfaces}. It is well-known that $\Aut(X)$ acts faithfully on the cohomology ring $H^*(X, \QQ)$ for a K3 surface $X$ (see \cite[page~555]{PS71} or \cite[Proposition~VIII.11.3]{BHPV}). This fact enhances the Torelli theorem for K3 surfaces, and is useful in the construction of a universal family of marked K3 surfaces.

In general, for a compact complex space $X$, one considers the cohomological representation of the automorphism group $\varphi\colon \Aut(X) \rightarrow \GL(H^*(X, \QQ))$, and calls the kernel of $\varphi$ the group of \emph{numerically trivial automorphisms}, which is denoted by $\Aut_\QQ(X)$ in this paper. Thus $\Aut_\QQ(X)$ is trivial if $X$ is a K3 surface. It would be interesting to know if the same holds true for higher dimensional compact hyperk\"ahler manifolds. This is indeed the case for all the known examples: 
\begin{itemize}[leftmargin=*]
\item $\KKK^{[n]}$-type: the Hilbert scheme $X^{[n]}$ of $n$ points on a K3 surface $X$ (\cite{beauville2}), and their deformations (Lemma~\ref{lem: AutQ deform}).
\item $\Kum_n$-type: the generalized Kummer varieties $K_n(T)$ associated to a two-dimensional complex torus $T$, and their deformations (\cite{Ogu20}).
\item $\OG_6$-type and $\OG_{10}$-type: O'Grady's hyperk\"ahler $6$-folds and $10$-folds, and their deformations (\cite{MW17}). Note that, although the authors of \cite{MW17} did not explicitly state the triviality of $\Aut_\QQ(X)$ for O'Grady's $6$-folds, it follows readily from their description of the fixed locus of an automorphism acting trivially on $H^2(X, \QQ)$.
\end{itemize}
Compared to the K3 surface case, one difficulty in higher dimensions lies in the fact that there are several different topological types of compact hyperk\"ahler manifolds in each dimension $2n$ with $n\geq 2$, and the topological classification of compact hyperk\"ahler manifolds is still a very challenging open problem.

The main goal of this paper is to establish the triviality of $\Aut_\QQ(X)$ for any compact hyperk\"ahler manifold of dimension $4$, without referring to the explicit construction of these manifolds.
\begin{thm}\label{main}
Let $X$ be a compact hyperk\"ahler manifold of dimension $4$. Then $\Aut_{\mathbb Q}(X)$ is trivial. In other words, $\Aut(X)$ acts faithfully on $H^*(X, \QQ)$.
\end{thm}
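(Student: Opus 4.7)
The strategy is to show directly that any $f \in \Aut_\QQ(X)$ must be the identity, via a detailed analysis of the fixed locus $X^f$ using Lefschetz-type fixed-point formulas.

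First I would reduce to the case where $f$ has prime order. Since $\Aut_\QQ(X)$ sits inside the kernel of the $\Aut(X)$-action on $H^2(X,\QQ)$, which is a finite group for any compact hyperk\"ahler manifold (a standard consequence of the boundedness of $\Aut(X, L)$ for a polarized hyperk\"ahler pair, applied after averaging a K\"ahler class), $\Aut_\QQ(X)$ is itself finite, and it suffices to rule out nontrivial $f$ of prime order $p$. Because $f^*$ is trivial on $H^{2,0}(X) \subset H^2(X, \CC)$, the symplectic form $\sigma$ is preserved, so $f$ is a symplectic automorphism. Consequently $X^f$ is smooth and each connected component is a symplectic submanifold; in dimension $4$ the only possibilities are isolated fixed points and smooth compact surfaces with trivial canonical bundle, i.e.\ K3 surfaces or complex $2$-tori.

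Next I would extract numerical constraints from fixed-point formulas. The topological Lefschetz theorem gives $\chi_\topo(X^f) = \chi_\topo(X)$, since $f^*$ is trivial on every $H^q(X, \QQ)$. The holomorphic Lefschetz (Atiyah--Bott) formula applied to $\OO_X$ gives $\chi(\OO_X) = 3$ on the cohomology side, equated on the geometric side to a sum of local contributions at the components of $X^f$ that depend only on the characters of $df$ on the normal bundles. Moreover, since $f^*$ acts trivially on $\Pic(X)$, every line bundle $L$ admits an $f$-linearization, and Atiyah--Bott applied to well-chosen $f$-linearized line bundles, combined with Fujiki's formula for $\chi(X, L)$ in terms of the BBF-square $q(L)$ and the Fujiki constants of $X$, produces further relations among the invariants of $X$ and the geometric data of the surface components of $X^f$.

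Finally, I would combine these relations to rule out every nontrivial configuration. Using the known bounds on the Betti numbers and Fujiki constants of compact hyperk\"ahler $4$-folds, the K3-versus-torus dichotomy for the surface components of $X^f$, and the $p$-adic congruences coming from the primitive $p$-th roots of unity appearing as eigenvalues of $df$ on the normal bundles, the plan is to show that no fixed-locus configuration strictly smaller than $X$ is numerically admissible, so that $X^f = X$ and $f = \id$. The main obstacle will be precisely this last arithmetic-combinatorial step: since the deformation or topological type of $X$ is not given in advance, the constraints must be applied uniformly across all admissible Betti numbers and Fujiki constants, and the case $p = 2$ (where fixed surfaces can be plentiful and the normal-bundle eigenvalues are real), together with the requirement that the whole argument remain independent of the deformation type of $X$, are the most delicate aspects.
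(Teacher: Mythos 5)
Your setup (finiteness of $\Aut_\QQ(X)$, reduction to prime order, symplecticity, and the point/K3/torus trichotomy for $X^f$) matches the paper, but from there you diverge: you propose to work entirely on $X$ via topological and holomorphic Lefschetz formulas for $f$-linearized line bundles, whereas the paper passes to the quotient $Y=X/\langle f\rangle$ and its crepant partial resolution $W$ of the codimension-$2$ singularities. The problem is that your decisive step --- the ``arithmetic-combinatorial'' elimination of every nonempty fixed-locus configuration --- is exactly where you stop, and you acknowledge it as the main obstacle rather than carrying it out. As written, this is a plan with a genuine gap, not a proof: the Atiyah--Bott contributions of the surface components involve integrals of characteristic classes of their normal bundles and of $L$ restricted to them (plus a root-of-unity ambiguity in the linearization), so each application of the formula introduces new unknowns alongside the undetermined Fujiki constants and Betti numbers of $X$, and it is not clear the resulting system is overdetermined enough to force $X^f=\emptyset$ uniformly over all topological types.

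It is worth noting that the paper's two decisive inputs are both invisible to fixed-point formulas on $X$. First, to kill the isolated points and K3 components it uses the orbifold Salamon relation of Fu--Menet applied to $W$, compared against Salamon's relation on $X$ and the Betti numbers of $Y$ (which equal those of $X$ by numerical triviality); this is a global topological constraint on hyperk\"ahler orbifolds, not a local fixed-point identity. Second, once $X^f$ consists only of tori, $W$ is a smooth hyperk\"ahler $4$-fold with $c_4(W)=\chi_\topo(W)=0$, and the eigensheaf decomposition $\pi_*\sO_X=\bigoplus_j\sM_j$ produces a line bundle $\sL=(\theta^*\sM_1)^{\vee\vee}$ on $W$ with $\chi(W,\sL)=0$; Nieper's Riemann--Roch together with Guan's classification of admissible $(b_2,b_3)$ then forces $c_4(W)\in\{324,108\}$, a contradiction. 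Your $\sO$-cohomological triviality of $f$ encodes the same information as $H^i(Y,\sM_1)=0$, but exploiting it requires descending to $W$, where Riemann--Roch applies to an honest line bundle on a smooth hyperk\"ahler manifold with tightly constrained Chern and Betti numbers. If you want to salvage a direct-on-$X$ argument, you would need substitutes for both of these quotient-side constraints, and no such substitutes are identified in your proposal.
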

We remark that Theorem~\ref{main} does not extend to singular symplectic varieties, as Example~\ref{ex} shows.

As suggested by the referee, we make some discussion on the automorphism group $\Aut_\mathbb{Z}(X)$ acting trivially on $H^*(X, \mathbb{Z})$, the cohomology ring with $\mathbb{Z}$-coefficients. Clearly, $\Aut_\mathbb{Z}(X)\subset\Aut_\mathbb{Q}(X)$, so the triviality of $\Aut_\mathbb{Q}(X)$ implies the triviality of $\Aut_\mathbb{Z}(X)$. In case the triviality of $\Aut_\mathbb{Q}(X)$ unfortunately fails, one might further ask for the triviality of $\Aut_\mathbb{Z}(X)$. One difficulty is that, for an element $g\in \Aut_\mathbb{Q}(X)$, one cannot conclude that $g\in \Aut_\mathbb{Z}(X)$ even if $g$ acts trivially on the torsion part of $H^*(X, \mathbb{Z})$. In principle, one needs to find explicitly the generators of $H^*(X, \mathbb{Z})$, in order to see whether each of them is fixed by $g$.

The paper is organized as follows: In Section~\ref{sec: pre} we recall the definitions of symplectic varieties and hyperk\"ahler manifolds, as well as several relevant subgroups of automorphisms. In Section \ref{sec: sym}, we study the fixed locus of a symplectic automorphism of a $4$-dimensional compact hyperk\"ahler manifold, and compute the Betti numbers of the quotient symplectic variety in terms of certain partial resolution. In Section~\ref{sec: O-trivial}, we construct line bundles with vanishing cohomology on a resolution of the quotient variety by an $\sO$-cohomologically trivial action, and this in turn gives a strong restriction on the topology when the resolution is a compact hyperk\"ahler manifold. In Section~\ref{sec: num trivial}, we prove the main theorem, and provide examples of singular symplectic varieties with nontrivial $\Aut_\QQ(X)$.

\medskip

{\bf \noindent Acknowledgment.} The authors would like to thank Shilin Yu for useful discussions and suggestions. 
Part of this work was done during the first author's visit to Xiamen University in May 2023, and the first author is grateful for the hospitality and support of Xiamen University.
We would like to thank the referee for useful comments and suggestions.

The first author was supported by National Key Research and Development Program of China \#2023YFA1010600, NSFC for Innovative Research Groups \#12121001, and National Key Research and Development Program of China \#2020YFA0713200.
The first author is a member of LMNS, Fudan University. The second author was supported by the NSFC (No.~11971399) and the Presidential Research Fund of Xiamen University (No.~20720210006).

\section{Preliminaries}\label{sec: pre}
In this section, we recall the definition of hyperk\"ahler manifolds as well as their singular version. We also define several subgroups of automorphisms considered in this paper. A {\it variety} means an irreducible and reduced separated complex space, or an integral separated scheme of finite type over $\mathbb{C}$ if one prefers the algebraic setting.

\subsection{Symplectic varieties}\label{sec: symp var}
Following \cite{Bea00} and \cite{BL22}, a \emph{symplectic variety} is a normal variety $X$ such that there is a non-degenerate closed holomorphic $2$-form $\sigma\in H^0(X_\sm, \Omega_{X_\sm}^2)$ on the smooth part $X_\sm$ and for any resolution of singularities $f\colon Y\rightarrow X$, the pullback $\pi^*\sigma$ extends to a holomorphic $2$-form on $Y$. A symplectic variety is called \emph{primitive} if it is compact K\"ahler with $H^1(X, \sO_X)=0$ and the holomorphic $2$-form is unique up to scalar. A compact \emph{hyperk\"ahler} or \emph{irreducible symplectic} manifold is a simply connected compact K\"ahler manifold that is symplectic and primitive.

\begin{exmp}\label{ex: symp}
 \begin{enumerate}[leftmargin=*]
 \item As seen in the introduction, all the known examples of compact hyperk\"ahler manifolds are divided in to several types according to their constructions: $\KKK^{[n]}$-type, $\Kum_n$-type, $\OG_6$-type, and $\OG_{10}$-type.
 \item (\cite[Example~3.2]{BL22}) Let $X$ be a primitive symplectic variety.
 \begin{enumerate}
  \item For a bimeromorphic contraction $f\colon X\rightarrow Y$ onto a normal variety, the target $Y$ is also a primitive symplectic variety.
  \item Any quotient of $X$ by a finite group of symplectic automorphisms preserving the symplectic structure of $X$ is again a primitive symplectic variety (see below for the definition of symplectic automorphisms).
 \end{enumerate}
 \end{enumerate}
\end{exmp}

\subsection{Automorphisms acting trivially on certain cohomology groups}
Let $X$ be a compact normal variety. We define 
\begin{itemize}
 \item the \emph{full automorphism group}
 \[
 \Aut(X)=\{g \mid g\colon X\rightarrow X \text{ is a biholomorphic map}\};
 \]
 \item the \emph{group of numerically trivial automorphisms}
 \[
 \Aut_\QQ(X)=\{g\in \Aut(X) \mid g^*|_{ H^*(X, \QQ)}=\text{id}\};
 \]
 \item the \emph{group of $\sO$-cohomologically trivial automorphisms}
 \[
 \Aut_\sO(X)=\{g\in \Aut(X) \mid g^*|_{H^*(X, \sO_X)}=\text{id}\};
 \]
 \item for a fixed $0\neq \sigma\in H^0(X_\sm,\Omega_{X_\sm}^2)$, the \emph{group of symplectic automorphisms (with respect to $\sigma$)} 
 \[
 \Aut(X, \sigma)=\{g\in \Aut(X) \mid g^*\sigma=\sigma\}.
 \]
\end{itemize}
If $X$ is a compact K\"ahler manifold, then $H^*(X, \sO_X)$ is direct summand of $H^*(X, \CC)=H^*(X, \QQ)\otimes \CC$ by the Hodge theory, and $\bar \sigma \in H^2(X, \sO_X)$. So we have natural inclusions
$$\Aut_\QQ(X)\subset\Aut_\sO(X)\subset \Aut(X,\sigma)\subset \Aut(X).
$$

\section{Quotients of compact hyperk\"ahler manifolds by symplectic automrophisms}\label{sec: sym}
We state several well-known facts about symplectic automrophisms of a compact hyperk\"ahler manifold and the quotient thereof, and at the same time fix the notation that will be used throughout the paper.
\begin{nota}\label{nota: symp}
Let $X$ be a compact hyperk\"ahler manifold and fix a non-zero $\sigma\in H^0(X, \Omega_{X}^2)$. Let $g\in \Aut(X, \sigma)$ be a symplectic automorphism of prime order $p$. We recall some easy facts about the fixed locus $X^g$ and the quotient variety $Y:=X/\langle g\rangle$.
\begin{enumerate}
 \item The connected components of the fixed locus $X^g$ are holomorphic symplectic manifolds (\cite[Proposition~5.2]{Ogu20}). In particular, if $\dim X=4$, then the fixed locus $X^g$ is the disjoint union of isolated points, K3 surfaces, and $2$-dimensional complex tori, the numbers of which are denoted by $m$, $k$, and $t$ respectively. 
 \item The quotient variety $Y:=X/\langle g\rangle$ has symplectic singularities; see Example~\ref{ex: symp} (2b) or \cite[(2.3)]{Bea00}. 
 In codimension $2$, $Y$ has at most ADE singularities by the classification of $2$-dimensional symplectic singularities. 
 Let $\theta: W\to Y$ be the crepant partial resolution of all codimension $2$ singularities. Then $W$ is a \emph{primitively symplectic orbifold} in the sense of \cite{FM21}. By \cite[Proposition 3.4]{Fu06}, $Y$ admits a symplectic resolution if and only if $W$ is smooth. 
\end{enumerate}
\end{nota}




\begin{lem}\label{lem: bYbW}
Keep the Notation~\ref{nota: symp} and assume that $\dim X=4$. Then the following relations on Betti numbers hold:
 \begin{align*}
 b_2(W){}&=b_2(Y)+(p-1)(k+t);\\
 b_3(W){}&=b_3(Y)+4(p-1)t;\\
 b_4(W){}&=b_4(Y)+(p-1)(22k+6t).
\end{align*}
\end{lem}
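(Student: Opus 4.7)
The plan is to carry out a local analysis along the codimension-$2$ singular locus $S \subset Y$, then globalize via a pushforward computation along $\theta$.

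First, I would pin down $S$. The singular locus of $Y = X/\langle g\rangle$ is the image of the fixed locus $X^g$ under the quotient map. The $m$ isolated fixed points contribute only codimension-$4$ singularities, while each $2$-dimensional fixed component $F$ (a K3 surface or a $2$-torus) maps isomorphically onto a codimension-$2$ component of $S$. At a point $x \in F$ one has $T_xX = T_xF \oplus N_{F/X,x}$ as a symplectic direct sum, and $g$ acts trivially on $T_xF$; preservation of $\sigma_x$ then forces $g|_{N_{F/X,x}} \in \mathrm{Sp}(2,\mathbb{C})$. As this element has order $p$, its eigenvalues are $\zeta, \zeta^{-1}$ for some primitive $p$-th root of unity, so $(Y, F)$ is locally analytically isomorphic to $F \times (A_{p-1}, 0)$, and $\theta$ is locally $\mathrm{id}_F \times \rho$, where $\rho\colon \widetilde{A}_{p-1}\to A_{p-1}$ is the minimal resolution.

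Second, I would exploit this local description to compute $R\theta_*\mathbb{Q}_W$. Off $S$ the map $\theta$ is an isomorphism, and above every point of $S$ the fiber is a chain of $p-1$ smooth rational curves (the exceptional set of $\rho$). Hence $R^0\theta_*\mathbb{Q}_W = \mathbb{Q}_Y$, $R^1\theta_*\mathbb{Q}_W = 0$, $R^2\theta_*\mathbb{Q}_W = \iota_*\mathbb{Q}_S^{\oplus(p-1)}$ where $\iota\colon S\hookrightarrow Y$ is the inclusion, and higher direct images vanish. Since $\theta$ is a semi-small crepant partial resolution (equivalently, the only possibly nonzero Leray differential $d_3\colon E_3^{p,2}\to E_3^{p+3,0}$ is forced to vanish because the neighboring row is zero), the Leray spectral sequence for $\theta$ degenerates at $E_2$ and gives
$$b_j(W) \;=\; b_j(Y) + (p-1)\,b_{j-2}(S) \quad \text{for all } j.$$

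Finally, I would plug in the Betti numbers of the components of $S$: $(1,0,22,0,1)$ for each K3 component, and $(1,4,6,4,1)$ for each $2$-torus. These yield $b_0(S) = k+t$, $b_1(S) = 4t$, and $b_2(S) = 22k + 6t$, which when substituted into the formula above reproduce the three claimed identities in degrees $j=2,3,4$. The main obstacle is the first step: verifying that the only codimension-$2$ singularities are of transverse $A_{p-1}$ type, which relies essentially on the symplecticity of $g$ forcing $\mathrm{Sp}(2)$-representations on normal bundles of positive-dimensional fixed components; once this local structure is in hand the remaining Leray/blow-up computation is routine for a crepant resolution of transverse Du Val singularities.
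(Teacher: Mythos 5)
Your proposal is correct in substance but takes a genuinely different route from the paper. You compute $R\theta_*\mathbb{Q}_W$ from the local transverse $A_{p-1}$ structure and then run the Leray spectral sequence, whereas the paper never touches direct images: it invokes a Mayer--Vietoris sequence for the discriminant square of $\theta$ (Lemma~\ref{lem:comparing bi of bir map1}, from Franciosi--Pardini--Rollenske/Peters--Steenbrink), combined with the injectivity of $\theta^*$ on rational cohomology for bimeromorphic maps of almost K\"ahler $V$-manifolds (Lemma~\ref{lem:comparing bi of bir map2}), to get $b_j(W)=b_j(Y)+b_j(\bigsqcup E_i)-b_j(\bigsqcup S_i)$, and then applies K\"unneth to $E_i\simeq S_i\times C_p$. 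Your route is more self-contained on the geometric side (you actually justify why the codimension-$2$ singularities are transverse $A_{p-1}$ via the $\mathrm{Sp}(2,\mathbb{C})$ action on the normal bundle, which the paper takes for granted in Notation~\ref{nota: symp}), at the cost of heavier sheaf-theoretic machinery; the paper's route externalizes all the topology into two citable lemmas.

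Two points need repair or at least explicit justification. First, your stated reason for the degeneration is wrong: $d_3$ maps the $b=2$ row to the $b=0$ row, so the vanishing of the $b=1$ row is irrelevant to it. The degeneration is still true, but you should argue it either via the decomposition theorem for the semismall map $\theta$ (using that $Y$ is a rational homology manifold, so $IC_Y\cong\mathbb{Q}_Y[4]$ and $R\theta_*\mathbb{Q}_W$ splits), or more elementarily by observing that the edge map $H^j(Y,\mathbb{Q})\to H^j(W,\mathbb{Q})$ is $\theta^*$ and is injective (exactly the content of Lemma~\ref{lem:comparing bi of bir map2}), which forces every differential landing in the bottom row --- in particular $d_3$ --- to vanish. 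Second, you assert $R^2\theta_*\mathbb{Q}_W=\iota_*\mathbb{Q}_S^{\oplus(p-1)}$ rather than a possibly nontrivial rank-$(p-1)$ local system; a priori the monodromy could act by the order-two Dynkin automorphism reversing the chain of exceptional curves. This is excluded because the two eigen-subbundles of $g$ acting on $N_{F/X}$ (for eigenvalues $\zeta,\zeta^{-1}$) are globally defined, so the chain is globally ordered; the paper's claim $E_i=S_i\times C_p$ quietly relies on the same fact, so this is a gap shared with (and no worse than) the original. With these two points patched, your argument is a valid alternative proof.
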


\begin{proof}
Denote by $S_i$ $(1\leq i\leq k+t)$ the codimension $2$ singular loci on $Y$. Recall that any point in $S_i$ is a cyclic quotient singularity of type $\frac{1}{p}(0,0,1,-1)$. Denote by $\theta^{-1}(S_i)= E_i$, then $\theta|_{E_i}: E_i\to S_i$ is a locally trivial fibration whose fiber is
a chain of $p-1$ smooth rational curves, and $E_i$ has $p-1$ irreducible components. By Lemmas~\ref{lem:comparing bi of bir map1} and \ref{lem:comparing bi of bir map2}, we have
\begin{equation}\label{eq: betti W}
\begin{split}
b_j(W){}&=b_j(Y)+ b_{j}(\bigsqcup_i E_i)-b_j(\bigsqcup_i S_i)\\
{}& =b_j(Y)+(p-1)\sum_{i=1}^{l} b_{j-2}(S_i),
\end{split}
\end{equation}
where for the last equality we use the Leray--Hirsch theorem (\cite[Theorem~4D.1]{Hat02}) to $E_i\to S_i$.
Finally, recall that
\begin{equation}\label{eq: betti S}
 (b_0(S_i), b_1(S_i), b_2(S_i))=(1, 0, 22) \text{ or } (1, 4, 6)
\end{equation} 
depending on whether $S_i$ is a K3 surface or a $2$-dimensional complex torus. The proof is complete by plugging \eqref{eq: betti S} into \eqref{eq: betti W}.
\end{proof}

\begin{lem}\label{lem:comparing bi of bir map1}
Let $f: \tilde Z \to Z$ be a holomorphic map between compact complex analytic spaces. Let $D\subset Z$ be a closed analytic subspace and $E= f^{-1}(D)$.
Suppose that for each integer $k$, the pullback map
$f^*: H^k(Z, \mathbb{Q}) \to H^k(\tilde Z, \mathbb{Q})$ is injective. Then for each integer $k$, there is a short exact sequence on cohomology groups:
$$
0\to H^k(Z, \mathbb{Q}) \to H^k(\tilde Z, \mathbb{Q}) \oplus H^k(D, \mathbb{Q}) \to H^k(E, \mathbb{Q})\to 0.
$$
\end{lem}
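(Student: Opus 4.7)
The plan is to deduce the short exact sequence from a Mayer--Vietoris-type long exact sequence attached to the Cartesian square
\[
\begin{CD}
E @>j>> \tilde Z \\
@VgVV @VVfV \\
D @>i>> Z
\end{CD}
\]
and then use the injectivity hypothesis to collapse this long exact sequence into short pieces.

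First, I would set up the Mayer--Vietoris long exact sequence
\[
\cdots \to H^{k-1}(E, \QQ) \xrightarrow{\delta} H^k(Z, \QQ) \xrightarrow{(f^*,\, i^*)} H^k(\tilde Z, \QQ) \oplus H^k(D, \QQ) \xrightarrow{j^* - g^*} H^k(E, \QQ) \xrightarrow{\delta} H^{k+1}(Z, \QQ) \to \cdots
\]
which arises as the hypercohomology long exact sequence of the distinguished triangle
\[
\QQ_Z \longrightarrow Rf_* \QQ_{\tilde Z} \oplus i_* \QQ_D \longrightarrow R(f\circ j)_* \QQ_E \xrightarrow{+1}
\]
of constructible sheaves on $Z$. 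Its existence in the analytic category is a standard consequence of proper cohomological descent for the blow-up (or abstract blow-up) square: $f$ is proper because $\tilde Z$ and $Z$ are compact, and the remaining verifications reduce to checking the triangle on stalks, where it follows from a comparison between $Z \setminus D$ (over which $f$ is an isomorphism in the cases of interest) and the closed pair $(D, E)$.

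Next, I would invoke the hypothesis. The combined map $(f^*, i^*)$ has $f^*$ as its first coordinate, and $f^*$ is injective by assumption; hence $(f^*, i^*)$ is injective in every degree $k$. Exactness of the Mayer--Vietoris sequence at $H^k(Z, \QQ)$ then forces the image of $\delta : H^{k-1}(E, \QQ) \to H^k(Z, \QQ)$ to vanish. Since this holds for all $k$, every connecting map $\delta$ is zero, and the long exact sequence breaks into the claimed short exact sequences.

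The main obstacle is justifying the distinguished triangle above. This is classical when one knows $f$ to be a proper map that is an isomorphism over $Z \setminus D$ (so that the square is an abstract blow-up square), which is precisely the situation in the applications (e.g.\ the crepant partial resolution $\theta\colon W\to Y$ of Lemma~\ref{lem: bYbW}). In that case the triangle can be constructed directly from the adjunction $\QQ_Z \to Rf_* \QQ_{\tilde Z}$ and the excision isomorphism $H^*(Z,D;\QQ) \cong H^*(\tilde Z,E;\QQ)$, together with the long exact sequences of the pairs $(Z,D)$ and $(\tilde Z, E)$, which together assemble into the Mayer--Vietoris sequence by standard diagram chasing.
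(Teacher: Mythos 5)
Your proof is correct and follows essentially the same route as the paper's: both obtain the short exact sequences by observing that injectivity of $f^*$ forces the connecting maps in the Mayer--Vietoris long exact sequence of the square to vanish, so the sequence breaks apart. The only difference is that the paper simply cites \cite[Proposition~3.1]{FPR15} (cf.\ \cite[Corollary-Definition~5.37]{PS08}) for the existence of that sequence, whereas you sketch its construction via the descent triangle; your caveat that this requires $f$ to be proper and an isomorphism over $Z\setminus D$ is well taken, since that discriminant-square hypothesis is implicit in the cited references (and holds in the paper's applications, e.g.\ for the crepant partial resolution $\theta$) but is not actually stated in the lemma.
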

\begin{proof}
 By \cite[Proposition~3.1]{FPR15} (cf. \cite[Corollary-Definition~5.37]{PS08}), there is a natural Mayer--Vietoris long exact sequence 
 $$
\dots \to H^{k-1}(E)\to H^k(Z) \to H^k(\tilde Z) \oplus H^k(D) \to H^k(E)\to H^{k+1}(Z) \to \cdots.
$$
Here the map $H^k(Z) \to H^k(\tilde Z)$ is given by the pullback $f^*$.
Therefore, the above long exact sequence breaks into short exact sequences.
\end{proof}

\begin{lem}\label{lem:comparing bi of bir map2}
Let $f: \tilde Z \to Z$ be a proper
surjective bimeromorphic morphism between almost K\"ahler V-manifolds (see \cite[\S2.5]{PS08}). Then for each integer $k$, the pullback map
$f^*: H^k(Z, \mathbb{Q}) \to H^k(\tilde Z, \mathbb{Q})$ is injective.
\end{lem}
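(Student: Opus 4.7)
The plan is to exhibit a left inverse to $f^*$ by combining Poincar\'e duality on compact V-manifolds with the observation that a proper surjective bimeromorphic morphism between equidimensional spaces has topological degree one. Let $n$ denote the common complex dimension of $Z$ and $\tilde Z$.

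Because both $Z$ and $\tilde Z$ are compact almost K\"ahler V-manifolds, the V-manifold Hodge theory recalled in \cite[\S2.5]{PS08} endows $H^*(Z,\mathbb{Q})$ and $H^*(\tilde Z,\mathbb{Q})$ with pure Hodge structures, and the underlying rational cohomology satisfies Poincar\'e duality: for each $k$ the cup product pairing
\[
H^k(Z,\mathbb{Q}) \times H^{2n-k}(Z,\mathbb{Q}) \longrightarrow H^{2n}(Z,\mathbb{Q}) \cong \mathbb{Q}
\]
is non-degenerate, and similarly on $\tilde Z$. At the same time, since $f$ is proper, surjective, and bimeromorphic between spaces of the same dimension, its generic fibre is a single reduced point, so $f$ has degree one, and hence $\int_{\tilde Z} f^* \gamma = \int_Z \gamma$ for every top-degree class $\gamma \in H^{2n}(Z, \mathbb{Q})$.

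Given these two inputs, the injectivity of $f^*$ follows from a short duality argument. Suppose $\alpha \in H^k(Z,\mathbb{Q})$ satisfies $f^* \alpha = 0$. Using that $f^*$ is a ring homomorphism and the degree-one relation above, for every $\beta \in H^{2n-k}(Z,\mathbb{Q})$ we compute
\[
\int_Z \alpha \cup \beta \;=\; \int_{\tilde Z} f^*(\alpha \cup \beta) \;=\; \int_{\tilde Z} f^*\alpha \cup f^*\beta \;=\; 0.
\]
Poincar\'e duality on $Z$ then forces $\alpha = 0$, which proves the lemma.

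The only potentially delicate step is the first one: making sure that rational Poincar\'e duality and the degree-one pushforward-by-integration formula are genuinely available in the ``almost K\"ahler V-manifold'' framework of \cite[\S2.5]{PS08}, rather than needing to be reproved. Both facts are standard in the orbifold/V-manifold literature -- Poincar\'e duality follows from the fact that a V-manifold is rationally a homology manifold, and the degree formula is proved exactly as in the smooth case once one works on the smooth locus of $Z$, over which $f$ is a biholomorphism -- but the cleanest write-up will simply invoke \cite{PS08} for both.
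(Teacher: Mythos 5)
Your proof is correct, but it takes a genuinely different route from the paper's. The paper disposes of the lemma in one line: choose a further bimeromorphic morphism $g\colon Y\to\tilde Z$ with $Y$ bimeromorphic to a K\"ahler manifold, invoke the proof of \cite[Theorem~2.43]{PS08} to obtain injectivity of $(f\circ g)^*\colon H^k(Z,\QQ)\to H^k(Y,\QQ)$, and conclude that $f^*$ is injective because it is the first factor of an injective composite. You instead argue directly: rational Poincar\'e duality on the compact $\QQ$-homology manifold $Z$, combined with the degree-one identity $\int_{\tilde Z}f^*\gamma=\int_Z\gamma$ (equivalently $f_*[\tilde Z]=[Z]$), produces a left inverse to $f^*$ via the standard pairing computation $\int_Z\alpha\cup\beta=\int_{\tilde Z}f^*\alpha\cup f^*\beta$. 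This is essentially the mechanism hidden inside the cited proof in \cite{PS08}, so the two arguments are close in spirit; what your version buys is self-containedness and slightly greater generality --- you never use the (almost) K\"ahler hypothesis, only that $Z$ is a compact irreducible $\QQ$-homology manifold and that $f$ is a proper surjective bimeromorphic morphism --- while the paper's version buys brevity by outsourcing exactly these inputs to \cite{PS08}. One cosmetic slip in your closing remark: $f$ need not be a biholomorphism over the smooth locus of $Z$ (a blow-up of a point on a smooth surface already fails this); what you actually use, and what holds by definition of a bimeromorphic morphism, is that $f$ is an isomorphism over a dense Zariski-open subset whose complement is a proper analytic subset, which is all the degree computation requires.
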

\begin{proof}
Take a bimeromorphic morphism $g: Y\to \tilde Z$ such that $Y$ is bimeromorphic to a K\"ahler manifold, then by \cite[Proof of Theorem~2.43]{PS08}, 
$(f\circ g)^*: H^k(Z) \to H^k(Y)$ is injective, which implies that $f^*$ is injective.
\end{proof}

 \section{Existence of line bundles with vanishing cohomology}\label{sec: O-trivial}
 We observe in Lemma~\ref{lem: L=0} the existence of line bundles with vanishing cohomology on a resolution of the quotient variety by an $\sO$-cohomologically trivial action. This can be used as in Proposition~\ref{prop: inv2} to deduce strong topological restrictions when the resolution is a compact hyperk\"ahler manifold.
\begin{lem}\label{lem: L=0}
Let $X$ be a compact complex manifold, and $g\in \Aut_\sO(X)$ an $\sO$-cohomologically trivial automorphism of prime order $p$. Let $\theta\colon W\rightarrow X/\langle g\rangle$ be a resolution of the quotient variety. Then there is a line bundle $\sL$ on $W$ such that $H^i(W, \sL)=0$ for any $i\geq 0$.
\end{lem}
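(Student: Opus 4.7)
The plan is to construct $\sL$ as a character eigensheaf on $W$ obtained by descending an isotypic decomposition through an equivariant resolution of the fibred product. First I would consider the quotient map $\pi\colon X\to Y:=X/\langle g\rangle$, fix a non-trivial character $\chi\colon \langle g\rangle\to\CC^\times$, and write the isotypic decomposition
\[
\pi_*\sO_X \;=\; \bigoplus_{i=0}^{p-1}\CF_i,
\]
with $\CF_i$ a rank-$1$ reflexive sheaf on $Y$ and $\CF_0=\sO_Y$. Since $\pi$ is finite, $H^j(X,\sO_X)=\bigoplus_i H^j(Y,\CF_i)$ identifies $H^j(Y,\CF_i)$ with the $\chi^i$-eigenspace of the $\langle g\rangle$-action on $H^j(X,\sO_X)$. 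The hypothesis $g\in\Aut_\sO(X)$ kills every non-trivial eigenspace, so $H^j(Y,\CF_i)=0$ for all $j\geq 0$ and $i\neq 0$.

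Next I would lift the picture to a smooth equivariant model over $W$. Let $Z$ be the normalisation of $X\times_Y W$, on which $\langle g\rangle$ acts by the first factor with quotient $Z/\langle g\rangle=W$, and pick an equivariant resolution $\rho\colon \tilde X\to Z$. The projection $\mu\colon \tilde X\to X$ is a proper birational morphism between complex manifolds, so $R\mu_*\sO_{\tilde X}=\sO_X$; hence $H^j(\tilde X,\sO_{\tilde X})\cong H^j(X,\sO_X)$ as $\langle g\rangle$-modules, and the $g$-action on $H^j(\tilde X,\sO_{\tilde X})$ remains trivial.

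To produce the line bundle I would use that $Z$, being a tame cyclic cover of the smooth variety $W$, has rational singularities. Rationality gives $R\rho_*\sO_{\tilde X}=\sO_Z$; combined with Cohen--Macaulayness of $Z$ and miracle flatness, the finite map $\beta\colon Z\to W$ is flat, $\beta_*\sO_Z$ is locally free of rank $p$ on $W$, and its $\langle g\rangle$-eigenspace decomposition produces line bundles. Set $\sL:=(\beta_*\sO_Z)_\chi$; then with $\alpha:=\beta\circ\rho\colon \tilde X\to W$ and the Leray spectral sequence,
\[
H^j(W,\sL) \;=\; H^j\bigl(W,(R\alpha_*\sO_{\tilde X})_\chi\bigr) \;=\; H^j(\tilde X,\sO_{\tilde X})_\chi \;=\; 0
\]
for every $j\geq 0$.

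I expect the rational-singularities claim for $Z$ to be the main hurdle: if the pre-image on $W$ of the branch divisor of $\pi$ is badly singular, $Z$ might fail to have tame singularities. To handle this I would replace $W$ by a log resolution $W'\to W$ on which this divisor is simple normal crossing, carry out the construction on $W'$ to obtain a line bundle $\sL'$ with vanishing cohomology there, and then descend to $W$ by identifying the answer with the intrinsic reflexive sheaf $(\theta^*\CF_1)^{\vee\vee}$ on $W$ and using that the two constructions agree on the open preimage of the smooth locus of $Y$.
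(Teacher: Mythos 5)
Your opening step (the eigensheaf decomposition $\pi_*\sO_X=\bigoplus_i\CF_i$ and the vanishing $H^j(Y,\CF_i)=0$ for $i\neq 0$ and all $j$) coincides with the paper's, but the way you transport this to $W$ has a genuine gap, which you flag yourself but do not repair. The assertion that $Z$, the normalization of $X\times_Y W$, has rational singularities ``being a tame cyclic cover of the smooth variety $W$'' is not a valid principle: in characteristic zero a degree-$p$ cyclic cover of a smooth variety can be normal with non-rational singularities (e.g.\ $\{z^2=f(x,y)\}\to\CC^2$ with $f$ square-free but sufficiently singular at the origin), and nothing in your argument establishes rationality for this particular $Z$ --- the fact that $Z$ maps properly and birationally onto the smooth $X$ does not by itself force $R\rho_*\sO_{\tilde X}=\sO_Z$. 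Without that, you lose both the identification $(\beta_*\sO_Z)_\chi=(R\alpha_*\sO_{\tilde X})_\chi$ and the Cohen--Macaulayness of $Z$ needed for miracle flatness, so the displayed computation of $H^j(W,\sL)$ does not go through. The proposed repair (pass to a log resolution $W'$ making the branch divisor SNC, build $\sL'$ there, descend) is only a sketch: to descend you would need $\mu_*\sL'$ to be the desired line bundle on $W$ \emph{and} $R^j\mu_*\sL'=0$ for $j>0$, neither of which you establish, and an identification of sheaves on an open subset does not give either.

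The paper's proof avoids the fibre product entirely and is both shorter and complete: it sets $\sL:=(\theta^*\sM_1)^{\vee\vee}$ directly on $W$ (invertible because reflexive of rank one on a manifold), shows $\theta_*\sL=\sM_1$ using that $\sM_1$, being a direct summand of the Cohen--Macaulay module $\pi_*\sO_X$, satisfies $S_2$, and then concludes $H^i(W,\sL)=H^i(Y,\theta_*\sL)=H^i(Y,\sM_1)=0$ from the vanishing of the higher direct images of $\sL$, which holds because $Y$ has only quotient singularities. Note that this last local statement about resolutions of quotient singularities is exactly the input your construction is missing; the global geometry of $Z$ is a detour that introduces the unverified rationality hypothesis without supplying a substitute for it.
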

\begin{proof}

Denote $Y=X/\langle g\rangle$, and let $\pi\colon X\rightarrow Y$ be the quotient map. Then $\pi_*\sO_X = \bigoplus_{j=0}^{p-1} \sM_j$, where for each $0\leq j\leq p-1$, $\sM_j$ is the rank $1$ eigensheaf corresponding to the eigenvalue $e^{2\pi j\sqrt{-1}/p}$. In particular, $\sM_0=\sO_Y$. Since $g$ acts trivially on $H^i(X, \sO_X)$, we have $H^i(X, \sO_X) = H^i(Y, \sO_Y)$ for any $i\geq 0$. It follows that $H^i(Y, \sM_j) = 0$ for $i\geq 0$ and $0<j<p$. 

\begin{claim}\label{clm: L}
The double dual $\sL:=(\theta^*\sM_1)^{\vee\vee}$ is invertible and $\theta_*\sL= \sM_1$. 
\end{claim}
\begin{proof}[Proof of Claim~\ref{clm: L}]
Since $\sL$ is reflexive of rank 1 on a smooth variety, it is invertible by \cite[Proposition~1.9]{Har80} or \cite[II.1.1.15]{OSS80}. The natural homomorphism $\theta^*\sM_1\rightarrow (\theta^*\sM_1)^{\vee\vee} = \sL$ induces an injective homomorphism $\sM_1\rightarrow \theta_*\sL$. As a direct summand of the Cohen--Macaulay module $\pi_*\sO_X$, the sheaf $\sM_1$ is Cohen--Macaulay. In particular, $\sM_1$ satisfies Serre's $S_2$ condition. It follows that the injective homomorphism $\sM_1\rightarrow \theta_*\sL$ is also surjective, hence an isomorphism.
\end{proof}

Finally, we have 
\[
H^i(W, \sL) = H^i(Y, \theta_*\sL) = H^i(Y, \sM_1)=0 
\]
for any $i\geq 0$, where the first equality follows from the fact that $Y$ has at most quotient singularities. The proof of the lemma is now complete.
\end{proof}


The following result is similar to \cite[Proposition~5.6]{BS22}, and might be of independent interest in the topological classification of $4$-dimensional compact hyperk\"ahler manifolds. 
\begin{prop}\label{prop: inv2}
Let $W$ be a compact hyperk\"ahler manifold of dimension $4$.
 If there exists a line bundle $\sL$ on $W$ such that $\chi(W, \sL)=0$, then 
 the (essential) Chern and Betti numbers of $W$ are among one of the following: 
 {
\begin{longtable}{LLLLL}
\caption{Topological data of $W$}\label{tab1}\\
\hline
\text{\rm No.} & c_2^2(W) & c_4(W) & b_2(W) & b_3(W) \\
\hline
\endfirsthead
\\
\hline 
\text{No.} & c_2^2(W) & c_4(W) & b_2(W) & b_3(W) \\
\endhead
\hline
\endfoot

\hline \hline
\endlastfoot

1& 828& 324 & 23 & 0\\
2& 756& 108 & 7 & 8\\
3& 756& 108 & 6 & 4\\
4& 756& 108 & 5 & 0 \\
\hline
\end{longtable}
}
Here $c_2^2(W)=\int c_2^2(W)$ and $c_4(W)=\int c_4(W)$ by abusing the notation.
\end{prop}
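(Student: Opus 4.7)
The plan is to combine Hirzebruch--Riemann--Roch, the Fujiki relations on a hyperk\"ahler manifold, the Hirzebruch signature theorem, and Guan's classification of Betti numbers of $4$-dimensional compact hyperk\"ahler manifolds.

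First, I would simplify HRR for $\sL$ on $W$. Because $c_1(W)=0$, the odd-degree parts of $\mathrm{td}(W)$ vanish and HRR reads
\[
\chi(W,\sL) \;=\; \chi(W,\sO_W) \;+\; \frac{1}{24}\int_W \bigl( c_1(\sL)^4 + c_1(\sL)^2 \cdot c_2(W) \bigr).
\]
The hyperk\"ahler Hodge decomposition gives $h^{0,i}(W) = 1, 0, 1, 0, 1$ for $i=0,\dots,4$, so $\chi(W,\sO_W)=3$. Hence $\chi(W,\sL)=0$ is equivalent to $\int_W \bigl( c_1(\sL)^4 + c_1(\sL)^2 \cdot c_2(W) \bigr) = -72$. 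The Fujiki relations furnish positive rational constants $c_W, c'_W$ with $\int \alpha^4 = c_W\, q(\alpha)^2$ and $\int \alpha^2 \cdot c_2(W) = c'_W\, q(\alpha)$ for all $\alpha \in H^2(W,\QQ)$, so this becomes the quadratic Diophantine constraint
\[
c_W\, q(\sL)^2 + c'_W\, q(\sL) + 72 = 0
\]
in the integer $q(\sL)$.

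Next, I would express the Chern and Betti numbers of $W$ as functions of $b_2$ and $b_3$ alone. HRR applied to $\sO_W$ on a Calabi--Yau $4$-fold together with $\chi(\sO_W)=3$ gives $3c_2^2(W) - c_4(W) = 2160$. Combining this with the Hirzebruch signature formula $\sigma(W) = (3c_2^2+14c_4)/45$ and the hyperk\"ahler Hodge symmetries $h^{3,1}=h^{1,1}=b_2-2$ and $h^{2,1}=b_3/2$ (which follow from the symplectic isomorphism $\Omega^p_W \simeq \Omega^{4-p}_W$), one solves for
\[
c_4(W) = 12b_2-3b_3+48, \quad c_2^2(W) = 4b_2-b_3+736, \quad b_4(W) = 10 b_2 - b_3 + 46.
\]

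Finally, I would invoke Guan's classification, which restricts $b_2 \in \{3,4,5,6,7,8,23\}$ (with further constraints on $b_3, b_4$) for a $4$-dimensional compact hyperk\"ahler manifold, to reduce to finitely many candidate pairs $(b_2,b_3)$. Combining this with the Diophantine equation and the positivity and integrality constraints on $c_W, c'_W$ should select exactly the four tuples listed in Table~\ref{tab1}. The main obstacle is this final enumeration: the Fujiki constants $c_W, c'_W$ are not pinned down by $(b_2, b_3)$ alone, so excluding candidates such as $(b_2,b_3)=(8,12)$ (which also yields $c_4=108$) requires finer input, likely through the integrality of $q(\sL)$ in the BBF lattice together with refinements of Guan's theorem.
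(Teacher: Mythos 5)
Your reduction of HRR to the three relations $3c_2^2-c_4=2160$, $c_4=12b_2-3b_3+48$, $c_2^2=4b_2-b_3+736$ is correct and coincides with what the paper does, and invoking Guan's list of admissible $(b_2,b_3)$ is also the right final input. But the middle step has a genuine gap, which you yourself flag: writing $\chi(W,\sL)=0$ as $c_W\,q(\sL)^2+c'_W\,q(\sL)+72=0$ leaves you with \emph{three} unknowns ($c_W$, $c'_W$, $q(\sL)$) that are not functions of $(b_2,b_3)$, and positivity plus rationality of the Fujiki constants is nowhere near enough to run the enumeration. As stated, the argument does not select the four rows of Table~\ref{tab1}; it only produces a one-parameter family of Diophantine conditions you cannot close.

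The missing idea is Nieper's form of Riemann--Roch \cite[Theorem~5.2 and (5.12)]{Nieper}: in terms of the \emph{characteristic value} $\lambda(\sL)=48\int\exp(\sL)\big/\int c_2(W)\exp(\sL)$, one has
\[
\chi(W,\sL)=3+\left(\tfrac{7}{2}-\tfrac{1}{864}c_4\right)\lambda(\sL)+\left(\tfrac{7}{8}-\tfrac{1}{3456}c_4\right)\lambda(\sL)^2 ,
\]
after substituting $3c_2^2-c_4=2160$. The point is that $\lambda(\sL)$ is a single \emph{rational} number and the coefficients depend only on $c_4$ --- the Fujiki constants cancel out of the ratio defining $\lambda$. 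Hence $\chi(W,\sL)=0$ forces the discriminant $\bigl(\tfrac{7}{2}-\tfrac{1}{864}c_4\bigr)^2-12\bigl(\tfrac{7}{8}-\tfrac{1}{3456}c_4\bigr)$ to be the square of a rational number, a condition purely on $c_4$, i.e.\ on $(b_2,b_3)$, which one then checks against Guan's 53 pairs. (In particular, the worry about $(b_2,b_3)=(8,12)$ is resolved not by any extra Diophantine refinement but simply because that pair is not on Guan's list.) Without replacing your Fujiki-relation formulation by Nieper's $\lambda$-formulation, or otherwise eliminating $c_W$ and $c'_W$, the proof cannot be completed along the lines you propose.
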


\begin{proof}
By \cite[Theorem~5.2 and (5.12)]{Nieper}, we can express the Riemann--Roch formula as the following:
\begin{align}
 \chi(W, \sL)=\chi(W, \OO_W) + \frac{1}{720}\left(\frac{7}{2}c_2^2-2c_4\right)\lambda(\sL)+\frac{1}{720}
\left(\frac{7}{8}c_2^2-\frac{1}{2}c_4\right)\lambda(\sL)^2,\label{eq:RR1}
\end{align} 
 where $$
\lambda(\sL):=\begin{cases}\frac{48\int \exp(\sL)}{ \int c_{2}(W) \exp(\sL)} & \text{if well-defined;}\\ 0 & \text{otherwise}\end{cases}
$$ is the {\it characteristic value} of $\sL$ (see \cite[Definition~17]{Nieper}).
By the fact that 
$$
3c_2^2-c_4=720\chi(W, \OO_W)=2160,
$$
we can simplify \eqref{eq:RR1} as
\begin{align*}
 \chi(W, \sL)=3 + \left(\frac{7}{2}-\frac{1}{864}c_4\right)\lambda(\sL)+ \left(\frac{7}{8} -\frac{1}{3456}c_4\right)\lambda(\sL)^2. 
\end{align*} 
It is clear that $\lambda(\sL)\in \mathbb{Q}$ by definition. By the assumption that $\chi(W, \sL)=0$, we infer that 
$$
\Delta: =\left(\frac{7}{2}-\frac{1}{864}c_4\right)^2-12\left(\frac{7}{8} -\frac{1}{3456}c_4\right)
$$
is a square of a rational number, which gives a strong restriction on the value of $c_4$. 

Recall that Guan \cite[Corollary~1]{guan01} gave a list of all possible Betti numbers of $W$
 (see also \cite[Theorem 3.6 and Corollary 3.7]{BD22}), namely, there are 53 possible values for the pair $(b_2(W), b_3(W))$. 
On the other hand, we have the following relations between Chern numbers and Betti numbers (see for example \cite[Proof of Theorem~2]{guan01}):
\begin{align*}
 c_4(W) {}&= 48+12b_2(W)-3b_3(W),\\
 c_2^2(W) {}& = 736+4b_2(W)-b_3(W).
\end{align*} 
Now it is straightforward to check that the only possible values making $\Delta$ a square of a rational number are those listed in Table~\ref{tab1}.
\end{proof}
\begin{rmk}
The topological data No.~1 and No.~2 in Table~\ref{tab1} are realized by $\KKK^{[2]}$-type and $\Kum_2$-type hyperk\"ahler manifolds respectively (\cite{BD22}). However, it is difficult to detect the actual existence of a line bundle with trivial Euler characteristic on a given compact hyperk\"ahler manifold.
\end{rmk}


\section{Triviality of numerically trivial automorphism group}\label{sec: num trivial}




 
The following two lemmas about $\Aut_\QQ(X)$ for compact hyperk\"ahler manifolds are well-known to experts. 

The first one is about the finiteness of $\Aut_\QQ(X)$.
\begin{lem}\label{lem: finite AutQ}
 Let $X$ be a compact K\"ahler manifold with $H^0(X, T_X)=0$. Then $\Aut_\QQ(X)$ is finite. In particular, $\Aut_\QQ(X)$ is finite if $X$ is hyperk\"ahler.
\end{lem}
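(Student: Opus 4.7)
The plan is to combine two classical ingredients: (i) a theorem of Fujiki--Lieberman which states that for a compact K\"ahler manifold $X$ and a K\"ahler class $[\omega] \in H^2(X, \mathbb{R})$, the stabilizer subgroup $\Aut(X, [\omega]) = \{g \in \Aut(X) \mid g^*[\omega] = [\omega]\}$ has only finitely many connected components, that is, $\Aut(X, [\omega])/\Aut^0(X)$ is finite; and (ii) the identification of the Lie algebra of the identity component $\Aut^0(X)$ with $H^0(X, T_X)$, which is the space of holomorphic vector fields on $X$.

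Concretely, I would proceed as follows. First I would fix any K\"ahler class $[\omega]$ on $X$ and observe that, by definition, any element of $\Aut_\QQ(X)$ acts trivially on $H^2(X, \QQ)$ and in particular fixes $[\omega]$, so that we have an inclusion $\Aut_\QQ(X) \subseteq \Aut(X, [\omega])$. Next I would invoke Fujiki--Lieberman to conclude that $\Aut(X, [\omega])$ has only finitely many connected components. Finally, I would use the assumption $H^0(X, T_X) = 0$ to conclude that $\Aut^0(X)$ is trivial (its Lie algebra vanishes, and it is connected), whence $\Aut(X, [\omega])$ is already finite, and so is its subgroup $\Aut_\QQ(X)$.

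For the ``in particular'' clause, I would verify that any compact hyperk\"ahler manifold $X$ satisfies $H^0(X, T_X) = 0$. For this, I would use the everywhere non-degenerate symplectic form $\sigma \in H^0(X, \Omega_X^2)$ to build an isomorphism of sheaves $T_X \xrightarrow{\sim} \Omega_X^1$ via contraction with $\sigma$. Taking global sections and using that a compact hyperk\"ahler manifold is simply connected (hence $b_1(X) = 0$, and by Hodge theory $h^{1,0}(X) = 0$), we get $H^0(X, T_X) \cong H^0(X, \Omega_X^1) = 0$, so the first part applies.

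The only point that is not completely routine is the citation to Fujiki--Lieberman; I would rely on the standard reference (Fujiki, \emph{On automorphism groups of compact K\"ahler manifolds}, or Lieberman's article in the Douady--Verdier seminar). Once this is in hand, the rest is formal, so I do not expect any genuine obstacle.
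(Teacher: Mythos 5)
Your argument is correct and is essentially the same as the paper's: both deduce that $\Aut^0(X)$ is trivial from $H^0(X,T_X)=0$ and then apply the Fujiki--Lieberman theorem to the subgroup fixing a K\"ahler class, which contains $\Aut_\QQ(X)$. You simply spell out the two ingredients (and the verification that hyperk\"ahler manifolds satisfy the hypothesis via $T_X\cong\Omega_X^1$ and $b_1=0$) in more detail than the paper's two-line proof.
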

\begin{proof}
Since $H^0(X, T_X)=0$, $\Aut_{\mathbb Q}(X)$ is discrete. Since $\Aut_{\mathbb Q}(X)$ fixes the K\"ahler classes by definition, it is finite (\cite{Fuj78, Lie78}). 
\end{proof}

The second lemma is about the deformation invariance of $\Aut_\QQ(X)$. 
\begin{lem}\label{lem: AutQ deform}
Let $f\colon \sX\rightarrow B$ be a family of compact hyperk\"ahler manifolds over a connected complex manifold $B$, and denote by $X_t=f^{-1}(t)$ the fiber over a point $t\in B$. Then, for $0\in B$ and $g\in\Aut_\QQ(X_{0})$, there is an automorphism $\tilde g\in \Aut(\sX)$ such that $\tilde g$ preserves each fiber $X_t$ of $f$, and $\tilde g|_{X_t}$ lies in $\Aut_\QQ(X_t)$.
\end{lem}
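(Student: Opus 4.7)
The plan is to extend $g$ via the universal deformation of $X_0$. Being hyperk\"ahler, $X_0$ satisfies $H^0(T_{X_0})\cong H^{1,0}(X_0)=0$ through the symplectic isomorphism $T_{X_0}\cong \Omega_{X_0}^1$ induced by $\sigma$, so $X_0$ has no infinitesimal automorphisms; by the Bogomolov--Tian--Todorov theorem the Kuranishi space $\mathrm{Def}(X_0)$ is then smooth with tangent space $H^1(T_{X_0})$ at $[X_0]$, and carries a universal family $\pi\colon \sX^{\mathrm{univ}}\to \mathrm{Def}(X_0)$. By the universal property, $g$ extends to an automorphism $g^{\mathrm{univ}}$ of this total space compatible with a biholomorphism of $\mathrm{Def}(X_0)$ that fixes $[X_0]$.

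Next I would argue that the induced action on $\mathrm{Def}(X_0)$ is the identity. Since $g\in\Aut_\QQ(X_0)\subset\Aut(X_0,\sigma)$, the $g$-action on $H^1(T_{X_0})$ corresponds via $\sigma$ to its action on $H^1(\Omega^1_{X_0})=H^{1,1}(X_0)\subset H^2(X_0,\CC)$, which is trivial because $g^*$ acts trivially on $H^2(X_0,\QQ)$. Hence the linearization of the $g$-action on $\mathrm{Def}(X_0)$ at $[X_0]$ is the identity; since $g$ has finite order by Lemma~\ref{lem: finite AutQ}, Cartan's linearization lemma implies that $g$ acts trivially on a neighborhood of $[X_0]$ in $\mathrm{Def}(X_0)$. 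Consequently $g^{\mathrm{univ}}$ preserves every fiber of $\pi$ over this neighborhood, and by local constancy of $R^*\pi_*\QQ$ its restrictions to these fibers all lie in the corresponding $\Aut_\QQ$ subgroups.

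To transfer to the given family $f\colon \sX\to B$, I would use that each point $t\in B$ has a neighborhood $U_t$ over which $\sX|_{U_t}$ is pulled back from a Kuranishi family via a classifying map, producing local fiber-preserving lifts $\tilde g_{U_t}$ by pullback; the rigidity $H^0(T_{X_s})=0$ at every $s\in B$ makes the relative automorphism space $\Aut(\sX/B)\to B$ étale with discrete fibers, so these local extensions glue uniquely into a global $\tilde g\in \Aut(\sX)$ whose restrictions lie in $\Aut_\QQ(X_t)$ for every $t$.

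The main obstacle I anticipate is the global assembly across $B$, namely ruling out monodromy that could interchange distinct local sections of $\Aut_\QQ(\sX/B)\to B$. This is handled by combining the finiteness of $\Aut_\QQ(X_t)$ (Lemma~\ref{lem: finite AutQ}) with the canonical, locally rigid nature of the Kuranishi extension, so that the distinguished section determined by $g$ is preserved under parallel transport and thus assembles to the desired $\tilde g\in\Aut(\sX)$.
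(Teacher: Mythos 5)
Your overall strategy differs from the paper's: the paper simply observes that a numerically trivial $g$ acts trivially on the torsion-free group $H^2(X_0,\ZZ)$, invokes \cite[Theorem~2.1]{HT13} to extend $g$ to a fiber-preserving automorphism of $\sX$, and then uses local constancy of the cohomology representation over connected $B$. You instead attempt to reprove the extension statement from scratch via Kuranishi theory. Your local analysis is correct and is essentially the openness half of the standard argument: $H^0(T_{X_0})\cong H^{1,0}(X_0)=0$ gives a universal Kuranishi family, the $g$-equivariant isomorphism $T_{X_0}\cong\Omega^1_{X_0}$ (valid because $g$ is symplectic) identifies the action on $H^1(T_{X_0})$ with the trivial action on $H^{1,1}(X_0)$, and Cartan's lemma (using finiteness of $g$ from Lemma~\ref{lem: finite AutQ}) kills the action on the base, so $g$ extends fiber-preservingly over a neighborhood of $[X_0]$.

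The gap is in the global assembly over $B$. An unramified/\'etale map with discrete fibers does not permit continuation of sections over all of a connected base --- an open embedding is \'etale with discrete fibers, yet its sections do not extend --- so your appeal to ``\'etale with discrete fibers, hence the local extensions glue'' does not close the argument. What is actually needed is that the locus of $t\in B$ to which $g$ continues is \emph{closed} as well as open, and this requires a properness input that neither the \'etaleness nor the finiteness of each group $\Aut_\QQ(X_t)$ supplies (finiteness of every fiber of $\Aut_\QQ(\sX/B)\to B$ does not imply properness of that map over $B$). The standard way to obtain closedness is a compactness argument for the graphs $\Gamma_{g_t}\subset X_t\times X_t$: since $g_t$ acts trivially on $H^2(X_t,\QQ)$ it fixes Kähler classes, so the volumes of the graphs with respect to a relative Kähler metric are uniformly bounded, and Bishop's theorem (equivalently, the Fujiki--Lieberman compactness \cite{Fuj78,Lie78} already used in Lemma~\ref{lem: finite AutQ}) produces a limit cycle which one then shows is again the graph of an automorphism. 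This is precisely the content of the closedness step in \cite[Theorem~2.1]{HT13} that the paper cites; your final paragraph names the right difficulty (monodromy/parallel transport) but asserts rather than proves that it is resolved. To repair the proof you should either carry out the Bishop-compactness argument explicitly or, as the paper does, quote \cite{HT13} for the extension and reserve your own argument for the (correct) concluding observation that local constancy of $t\mapsto (\tilde g|_{X_t})^*$ on $H^*(X_t,\QQ)$ forces numerical triviality on every fiber.
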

\begin{proof}
Since $g$ is numerically trivial and $H^2(X,\ZZ)$ has no torsion, $g$ induces the trivial action on $H^2(X, \ZZ)$. By \cite[Theorem~2.1]{HT13}, there is an automorphism $\tilde g\in \Aut(\sX)$ such that it preserves each $X_t$ and $\tilde g|_{X_0}=g$. Since the induced action of $\tilde g|_{X_t}$ on the cohomology ring $H^*(X_t,\QQ)$ is locally constant with respect to $t\in B$ and $B$ is connected, the triviality of $g^*|_{H^*(X_0,\QQ)}$ implies that of $(\tilde g|_{X_t})^*|_{H^*(X_t,\QQ)}$ for any $t\in B$.
\end{proof}

 
From now on, let $X$ be a $4$-dimensional compact hyper\"ahler manifold, and $g\in\Aut_\QQ(X)$ a numerically trivial automorphism of prime order $p$, which is automatically symplectic with respect to any $\sigma\in H^0(X, \Omega_X^2)$. Consider the following diagram
\[
\begin{tikzcd}
X \arrow[rd, "\pi"] & & W \arrow[ld, "\theta"']\\
& Y=X/\langle g \rangle &
\end{tikzcd}
\]
where $\pi\colon X\rightarrow Y=X/\langle g\rangle$ is the quotient map, and $\theta\colon W\rightarrow Y$ the crepant partial resolution that resolves the codimension $2$ singularities of $Y$. We shall compare the topological data of $X, Y, W$, in order to obtain a contradiction. Hereby, it is crucial to describe the fixed locus $X^\sigma$. As in Notation~\ref{nota: symp}, we denote the numbers of points, K3 surfaces, and $2$-dimensional complex tori in $X^g$ by $m, k$, and $t$ respectively. 
\begin{lem}\label{lem: km=0} 
The following assertions hold:
\begin{enumerate}
\item $m=k=0$. Namely, there are no isolated points or K3 surfaces in the fixed locus $X^g$.
\item $W$ is smooth, and hence a $4$-dimensional compact hyperk\"ahler manifold.
\item $\chi_\topo(W) = \chi_\topo(X)=0$, where $\chi_\topo$ denotes the topological Euler characteristic.
\end{enumerate}

\end{lem}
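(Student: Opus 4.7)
The plan is to combine the vanishing line bundle on $W$ from Lemma~\ref{lem: L=0}, the Betti-number comparison of Lemma~\ref{lem: bYbW}, the restrictions of Proposition~\ref{prop: inv2}, the Lefschetz fixed point theorem on $X$, and Guan's classification of Betti numbers of $4$-dimensional compact hyperk\"ahler manifolds. As setup, since $g$ acts trivially on $H^*(X,\QQ)$, by Hodge theory it acts trivially on $H^*(X,\sO_X)$, so $g\in\Aut_\sO(X)$ and Lemma~\ref{lem: L=0} yields a line bundle $\sL$ on $W$ with $\chi(W,\sL)=0$. The Lefschetz fixed point formula gives
\[
m + 24k = \chi_\topo(X^g) = \chi_\topo(X) = 48+12b_2(X)-3b_3(X),
\]
and the $g$-invariance of cohomology gives $b_i(Y) = b_i(X)$ for every $i$.

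Next I would establish assertion (2) by showing $m=0$, which is equivalent to $W$ being smooth, since the unresolved singularities of $W$ come precisely from the $m$ isolated fixed points. The natural tool is the holomorphic Lefschetz formula for $(g,\sO_X)$: as $g$ fixes $\sigma$ and acts trivially on $H^*(X,\sO_X)$, one has $\chi(X,\sO_X)^g=3$, which decomposes as a sum of strictly positive contributions $\tfrac{1}{16\sin^2(\pi a/p)\sin^2(\pi b/p)}$ from the isolated fixed points (with $\zeta^{\pm a},\zeta^{\pm b}$ the eigenvalues of $g$ on $T_xX$) together with Atiyah--Bott integrals from the K3 and torus components. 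Combined with the Lefschetz equation above, this system of numerical identities should force $m=0$. A backup strategy is to resolve the isolated quotient singularities of $W$ by a smooth model $\tilde W$, pull back $\sL$, and extract a contradiction by tracking the Kawasaki correction terms in a Riemann--Roch computation. Once $m=0$, $W$ is smooth, simply connected (since $X$ is and $X^g\neq\emptyset$), and carries a holomorphic symplectic form lifted from $\sigma$; hence it is a $4$-dimensional compact hyperk\"ahler manifold.

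With $W$ smooth hyperk\"ahler and $\chi(W,\sL)=0$, Proposition~\ref{prop: inv2} restricts $(b_2(W),b_3(W))$ to one of the four options in Table~\ref{tab1}. Substituting
\[
b_2(W) = b_2(X) + (p-1)(k+t), \qquad b_3(W) = b_3(X) + 4(p-1)t
\]
from Lemma~\ref{lem: bYbW} together with $24k = 48+12b_2(X)-3b_3(X)$ (using $m=0$), and cross-checking against Guan's list of $53$ possible pairs $(b_2(X),b_3(X))$, a finite case analysis over the prime $p\geq 2$ and non-negative integers $k,t$ should eliminate every configuration with $k>0$, giving (1). Assertion (3) then follows immediately: with $m=k=0$, Lefschetz gives $\chi_\topo(X)=\chi_\topo(X^g)=0$; applying Lemma~\ref{lem: bYbW} to all Betti numbers (equivalently, using $\chi_\topo(W)-\chi_\topo(Y)=24(p-1)k$ and the orbifold identity $\chi_\topo(Y)=\chi_\topo(X)=0$) yields $\chi_\topo(W)=0$.

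The main obstacle is ruling out $m>0$ in order to invoke Proposition~\ref{prop: inv2}, which is stated only for smooth compact hyperk\"ahler $4$-folds. Without a direct orbifold version of that proposition, the holomorphic Lefschetz formula on $X$ combined with Guan's numerical constraints appears to be the right tool, but tracking the positive yet arithmetically subtle contributions of isolated fixed points to $\chi(X,\sO_X)^g=3$ is the delicate step.
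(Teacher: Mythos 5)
Your outline of parts (2) and (3) is fine in spirit, but the pivotal step --- ruling out $m>0$ --- is not actually proved, and the tool you propose for it cannot work as stated. The holomorphic Lefschetz number $\chi(X,\sO_X)^g=3$ holds for \emph{every} symplectic automorphism of a hyperk\"ahler $4$-fold (it only uses $g\in\Aut_\sO(X)$, which is automatic once $g^*\sigma=\sigma$), and there are plenty of symplectic automorphisms with many isolated fixed points (e.g.\ symplectic involutions of $\KKK^{[2]}$-type manifolds, with $m=28$ and $k=1$). So the identity $\chi(X,\sO_X)^g=3$, even decomposed into positive point contributions plus Atiyah--Bott integrals over the surface components, cannot by itself force $m=0$; and the surface contributions are not pinned down by anything you write, so combining with $m+24k=\chi_\topo(X)$ does not close the system either. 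The ``backup strategy'' via Kawasaki corrections is likewise only a hope. Since Proposition~\ref{prop: inv2} is stated for smooth hyperk\"ahler $4$-folds, your entire subsequent case analysis for $k=0$ is conditional on this unestablished step.

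The paper's route is different and avoids the issue entirely: it applies the \emph{orbifold} Salamon relation of Fu--Menet to the partial resolution $W$, which is a primitively symplectic orbifold whether or not $m=0$, with singularity correction term $s=-m(p-1)$. Comparing with the usual Salamon relation on $X$ (using $b_j(Y)=b_j(X)$ from numerical triviality and Lemma~\ref{lem: bYbW}) yields the single identity $(m+12k)(p-1)=0$, killing $m$ and $k$ simultaneously; neither Lemma~\ref{lem: L=0} nor Proposition~\ref{prop: inv2} is needed here (they enter only later, in the proof of Theorem~\ref{main}). Incidentally, even granting $m=0$, your detour through Table~\ref{tab1} and Guan's $53$ pairs to get $k=0$ is unnecessary: the ordinary Salamon relation on the then-smooth $W$ versus $X$ already gives $12k(p-1)=0$ directly. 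Part (3) of your argument agrees with the paper and is correct once (1) and (2) are in place.
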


\begin{proof}
(1) By the orbifold Salamon relation \cite[Theorem~3.6]{FM21}, we have $$b_4(W)+b_3(W)-10b_2(W)=46+s,$$
where $s$ is the contribution of singularities.
Moreover, by our construction, $W$ has exactly $m$ cyclic quotient singularities of index $p$, so $s=-m(p-1)$ by the computation in \cite[Example~3.8]{FM21}.
On the other hand, we have $$b_4(X)+b_3(X)-10b_2(X)=46$$ by the usual Salamon relation \cite{Salamon}
and $b_j(Y)=b_j(X)$ for all $j$ by the numerical triviality of $g$.
 Hence by Lemma~\ref{lem: bYbW}, we have
 $$-m(p-1)=(p-1)(22k+6t+4t-10k-10t),$$ which implies that $(m+12k)(p-1)=0.$ Since $p-1>0$, we infer that $m=k=0$.

(2) The smoothness of $W$ follows from the construction of $W$ and $m=0$.

(3) By (1), the fixed locus $X^g$ is the disjoint union of $2$-dimensional complex tori $S_i$ ($1 \leq i\leq t$). Since $g\in \Aut_{\mathbb{Q}}(X)$, by the topological Lefschetz fixed point formula, we have
\[
\chi_\topo(X) = \chi_\topo(X^g) = \sum_{ i=1 }^t \chi_\topo(S_i) = 0. 
\]
On the other hand, by Lemma~\ref{lem: bYbW}, we have 
\begin{align*}
 b_2(W){}&=b_2(Y)+(p-1)t,\\
 b_3(W){}&=b_3(Y)+4(p-1)t,\\
 b_4(W){}&=b_4(Y)+6(p-1)t.
\end{align*}
Taking the alternating sum of the Betti numbers, we obtain 
\[
\chi_\topo(W) = \chi_\topo(Y) =\chi_\topo(X)=0,
\]
where the second equality is again by the numerical triviality of $g$.
\end{proof}

\begin{proof}[Proof of Theorem~\ref{main}]
Suppose on the contrary that $\Aut_{\mathbb Q}(X)$ is nontrivial, then there is an automorphism $g\in \Aut_{\mathbb Q}(X)$ of prime order. The quotient variety $Y=X/\langle g\rangle$ admits a symplectic resolution $W$ with $c_4(W)=\chi_\topo(W)=0$ by Lemma~\ref{lem: km=0}. 
On the other hand, $W$ admits a line bundle $\sL$ with $\chi(W, \sL)=0$ by Lemma~\ref{lem: L=0}, which contradicts Proposition~\ref{prop: inv2}.
\end{proof}

Finally, in contrast to Theorem~\ref{main}, we provide examples of \emph{singular} primitive symplectic varieties whose $\Aut_\QQ(X)$ is nontrivial. The automorphisms involved have been treated from other perspectives in \cite{beauville2, Ogu20}.
\begin{exmp}\label{ex}
\begin{enumerate}[leftmargin=*] 
\item Let $T$ be a $2$-dimensional complex torus, and $X_2=T/\langle -1 \rangle$ the quotient of $T$ by the involution $-1$. Then $X_2$ is a (singular) Kummer surface with $16$ $A_1$-singularities. Let $\tau_\epsilon$ be a translation of $T$ by a $2$-torsion element $\epsilon\in T[2]$. Then $\tau_\epsilon$ commutes with $-1$, and hence descends to an involution of $X_2$, denoted by $\bar \tau_\epsilon$. Note that, translations act trivially on $H^*(T, \QQ)$, and it follows that the induced automorphism $\bar \tau_\epsilon$ acts trivially on $H^*(X_2, \QQ)$, which can be viewed as a subspace of $H^*(T, \QQ)$ via the pullback by the quotient map. Therefore, $G:=\{\bar\tau_\epsilon \mid \epsilon\in T[2]\}\cong(\ZZ/2\ZZ)^4$ is a subgroup of $\Aut_\QQ(X_2)$.

\item The above construction of Kummer surfaces can be generalized to higher dimensions as follows. For a given integer $n\geq 2$, consider the following sub-torus of $T^{n+1}$:
\[
T(n):=\left\{(P_1, P_2, \dots, P_{n+1})\in T^{n+1} \,\bigg|\, \sum_{i=1}^{n+1}P_i =0\right\}.
\]
We have $T(n)\cong T^n$, and it is stable under the action by the permutation group $\mathfrak{S}_{n+1}$ on the $n+1$ coordinates of $T^{n+1}$, and by $G\cong T[n+1]\cong(\ZZ/(n+1)\ZZ)^4$, where $G$ consists of simultaneous translations by an $(n+1)$-torsion element $\epsilon\in T[n+1]$ on each coordinate.

Now set $X_{2n} = T(n)/\mathfrak{S}_{n+1}$. Then $X_{2n}$ is a singular irreducible symplectic variety, since the generalized Kummer variety $K_{n}(T)$ is a symplectic resolution of $X_{2n}$ (\cite[\S2]{Ogu20}). Since the actions of $G$ and $\mathfrak{S}_{n+1}$ on $T(n)$ commute, the action of $G$ descends to $X_{2n}$. Now, as in the surface case above, the triviality of the $G$-action on $H^*(T(n), \QQ)$ implies the triviality of the $G$-action on $H^*(X_{2n}, \QQ)$, and hence $G$ is a subgroup of $\Aut_\QQ(X_{2n})$.

\end{enumerate}

\end{exmp}




\end{document}